\definecolor{webgreen}{rgb}{0,.5,0}
\definecolor{webbrown}{rgb}{.6,0,0}
\newcommand{\seqnum}[1]{\href{https://oeis.org/#1}{\rm \underline{#1}}}
\begin{document}

%\begin{center}
%\epsfxsize=4in
%\leavevmode\epsffile{logo129.eps}
%\end{center}

\theoremstyle{plain}
\newtheorem{theorem}{Theorem}
\newtheorem{corollary}[theorem]{Corollary}
\newtheorem{lemma}[theorem]{Lemma}
\newtheorem{proposition}[theorem]{Proposition}

\theoremstyle{definition}
\newtheorem{definition}[theorem]{Definition}
\newtheorem{example}[theorem]{Example}
\newtheorem{conjecture}[theorem]{Conjecture}

\theoremstyle{remark}
\newtheorem{remark}[theorem]{Remark}

\begin{center}
\vskip 1cm
{\LARGE\bf Arithmetic Identities for Some Analogs of $5$-core Partition Function}

\vskip 1cm
\large
Subhajit Bandyopadhyay\\
Department of Mathematical Sciences, Tezpur University\\
Napaam 784028, Assam\\
India\\
\href{mailto:subhajitbn.maths@gmail.com}{\tt subhajitbn.maths@gmail.com} \\

\vspace{.4cm}
Nayandeep Deka Baruah\\
Department of Mathematical Sciences, Tezpur University\\
Napaam 784028, Assam\\
India\\
\href{mailto:nayan@tezu.ernet.in}{\tt nayan@tezu.ernet.in} \\
\end{center}

\vskip .2 in
\begin{abstract}
Recently, Gireesh, Ray, and Shivashankar studied an analog, $\overline{a}_t(n)$, of the $t$-core partition function, $c_t(n)$. In this paper, we study the function $\overline{a}_5(n)$ (\seqnum{A053723}) in conjunction with $c_5(n)$ (\seqnum{A368490}) as well as another analogous function $\overline{b}_5(n)$ (\seqnum{A368495}). We also find several arithmetic identities for $\overline{a}_5(n)$ and $\overline{b}_5(n)$.
\end{abstract}

\section{Introduction}
A \emph{partition} $\lambda=(\lambda_1,\lambda_2,\ldots,\lambda_k)$
of a natural number $n$ is a finite sequence of non-increasing
positive integer \emph{parts} $\lambda_i$ such that
$n=\sum_{i=1}^k\lambda_i$. The \emph{Ferrers--Young diagram} of the
partition $\lambda$ of $n$ is constructed by placing $n$ nodes in $k$
rows so that the $i$th row has $\lambda_i$ nodes. The nodes are marked with the row and column coordinates, similar to how one would mark the position of the elements of a matrix. Let $\lambda_j^\prime $ denote the number of nodes in column $j$. The \emph{hook number} $H(i,j)$ for the node at position $(i,j)$ is determined by counting the nodes situated directly below and to the right of it, including the node itself. That is, $H(i,j)=\lambda_i+\lambda_j^\prime-i-j+1$. If none of the hook numbers of a partition is divisible by $t$, then it is called a \emph{$t$-core}.

\begin{example} The Ferrers--Young diagram of the
partition $\lambda=(4,3,1,1)$ of $9$ is
\begin{center}\begin{tabular}{p{.3cm}p{.3cm}p{.3cm}p{.3cm}}
{$\bullet$} & {$\bullet$} & {$\bullet$} & {$\bullet$}\\
{$\bullet$}&{$\bullet$}&{$\bullet$}\\
{$\bullet$}\\{$\bullet$}
\end{tabular}
\end{center}
The nodes $(1,1)$, $(1,2)$, $(1,3)$, $(1,4)$, $(2,1)$, $(2,2)$, $(2,3)$, $(3,1)$, and
$(4,1)$ have hook numbers 7, 4, 3, 1, 5, 2, 1, 2, and 1, respectively.
Therefore, $\lambda$ is a $t$-core for $t=6$ and $t\ge 8$.
\end{example}

Granville and Ono \cite{go} proved that for $t\ge4$, every natural
number $n$ has a $t$-core. For a recent survey on $t$-cores, we refer the readers to a paper by Cho, Kim, Nam, and Sohn \cite{HBHJ}.

If $c_t(n)$ denotes the number of $t$-cores of $n$, then its generating function is given by (see \cite[Eq.\ (2.1)]{GKS}) 
\begin{equation}\label{ct}
	\sum_{n=0}^{\infty} c_t(n)q^n = \dfrac{f_t^t}{f_1},
\end{equation}
where for integer $j\geq1$, $f_j:=(q^j;q^j)_\infty$ and throughout the paper, for complex numbers $a$ and $q$ with $|q|<1$, we define
$$(a;q)_\infty:=\prod_{k=0}^\infty (1-aq^k).$$

For $|ab|<1$, Ramanujan's general theta function $f(a,b)$ is defined by
\begin{align*}
f(a,b):=\displaystyle\sum_{n=-\infty}^{\infty}a^{{n(n+1)}/2}b^{{n(n-1)}/2}.
\end{align*}
In this notation, Jacobi's well-known triple product identity \cite[p.\ 35, Entry 19]{BCB3} takes the form
\begin{align}
\label{JTPI} f(a,b)=(-a;ab)_{\infty}(-b;ab)_{\infty}(ab;ab)_{\infty}.
\end{align}

Consider the following special cases of $f(a,b)$: 
\begin{align}\label{phi-q}\varphi(-q)&:=f(-q,-q)=\sum_{n=-\infty}^\infty (-1)^nq^{n^2}=\dfrac{f_1^2}{f_2},\\
\label{psi}
\psi(-q)&:=f(-q,-q^3)=\sum_{n=0}^\infty (-q)^{n(n+1)/2}=\dfrac{f_1f_4}{f_2},\\\label{f-q}
f(-q)&:=f(-q,-q^2)=\sum_{n=-\infty}^\infty (-1)^n q^{n(3n-1)/2}=f_1,
\end{align}
where the $q$-product representations in the above arise from \eqref{JTPI} and manipulation of the $q$-products.

In the notation of \eqref{f-q}, the generating function \eqref{ct} of $c_t(n)$ may be recast as
\begin{equation}\label{ctf}
	\sum_{n=0}^{\infty} c_t(n)q^n = \dfrac{f^t(-q^t)}{f(-q)}.
\end{equation}
 
Recently, Gireesh, Ray, and Shivashankar \cite[Eq.\ (1.2)]{GRS} considered an analog $\overline{a}_t(n)$ of $c_t(n)$ with $f(-q)$ is replaced by $\varphi(-q)$ in \eqref{ctf}, namely, 
\begin{equation*}
	\sum_{n=0}^{\infty} \overline{a}_t(n)q^n = \dfrac{\varphi^t(-q^t)}{\varphi(-q)}.
\end{equation*}
They obtained some arithmetic identities and multiplicative formulas for $\overline{a}_3(n)$, $\overline{a}_4(n)$, and $\overline{a}_8(n)$ by using Ramanujan's theta functions (It is to be noted that Theorem 1.1 in their paper \cite{GRS} holds only for a special case. The induction process in the proof of the theorem is not quite correct). Employing the theory of modular forms, they also studied the arithmetic density of $\overline{a}_t(n)$ and found the following Ramanujan type congruence for $\overline{a}_5(n)$ \cite[Theorem 1.10]{GRS}: For all $n\geq0$,
\begin{align}\label{girees_a5_congruence}
	\overline{a}_5(20n+6) \equiv 0 \pmod{5}.
\end{align}

Note that
\begin{align}\label{at}
	\sum_{n=0}^{\infty} \overline{a}_5(n)q^n& = \dfrac{\varphi^5(-q^5)}{\varphi(-q)} \notag\\
	&= 1+2q+4q^2+8q^3+14q^4+14q^5+20q^6+24q^7+\cdots.
\end{align}

In this paper, we revisit the function $\overline{a}_5(n)$ in conjunction with $c_5(n)$ as well as another function $\overline{b}_5(n)$ defined by
\begin{equation}\label{bt}
	\sum_{n=0}^{\infty} \overline{b}_5(n)q^n = \dfrac{\psi^5(-q^5)}{\psi(-q)} = 1+q+q^2+2q^3+3q^4-q^5+2q^7-2q^9+6q^{10}+\cdots,
\end{equation}
where $\psi(-q)$ is defined in \eqref{psi}.

The sequences $(c_5(n))$, $(\overline{a}_5(n))$, and $(\overline{b}_5(n))$ are \seqnum{A053723}, \seqnum{A368490}, and \seqnum{A368495}, respectively, in \cite{OEIS}.

We state our results in the following theorems and corollaries. In the sequel, we assume that $c_5(n)=\overline{a}_5(n)=\overline{b}_5(n)=0$ for $n<0$. 

A recurrence relation for $\overline{a}_5(n)$ and some relations between $\overline{a}_5(n)$ and $c_5(n)$ are stated in the following theorem. 
\begin{theorem}\label{a5_main_theorem}
	For every nonnegative integer $n$,
	\begin{align}
		\overline{a}_5(5n+2) &= 4c_5(5n+1), \label{a5n2_thm_statement}\\
		\overline{a}_5(5n+3) &= 4c_5(5n+2), \label{a5n3_thm_statement} \\
		\overline{a}_5(10n+1) &= 2c_5(10n), \label{a10n1_thm_statement}\\
		\overline{a}_5(10n+9) &= 2c_5(10n+8), \label{a10n9_thm_statement} \\
		\overline{a}_5(20n+6) &= 10c_5(10n+2),\label{a5_main_theorem_1}\\
		\overline{a}_5(20n+14) &= 10c_5(10n+6).\label{a5_main_theorem_2}
	\end{align}
	Furthermore, for every integer $k\geq2$,
	\begin{align}\label{a5_main_theorem_3}
		\overline{a}_5(5^{k}n) = \left(\dfrac{5^{k} - 1}{4}\right)\overline{a}_5(5n) - \left(\dfrac{5^{k} - 5}{4}\right)\overline{a}_5(n).
	\end{align}
\end{theorem}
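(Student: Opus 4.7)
The plan is to work throughout at the level of $q$-series. Using $\varphi(-q) = f_1^2/f_2$ we rewrite
$$\sum_{n\geq 0} \overline{a}_5(n)\, q^n = \frac{f_2\, f_5^{10}}{f_1^2\, f_{10}^5}, \qquad \sum_{n\geq 0} c_5(n)\, q^n = \frac{f_5^5}{f_1},$$
so each of the identities \eqref{a5n2_thm_statement}--\eqref{a5_main_theorem_2} amounts to the assertion that a particular arithmetic progression of coefficients in the first series agrees, up to an explicit integer factor, with a progression of coefficients in the second. My strategy is a two-step dissection: first a 2-dissection of $1/f_1^2$ (equivalently, of $1/\varphi(-q)$) to split the first series according to the parity of $n$, yielding explicit $q$-products in $f_2,f_4,f_5,f_{10},f_{20}$; then a 5-dissection obtained from Ramanujan's classical 5-dissection of $\varphi$, $\psi$, and $f_1$. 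Each residue class modulo $5$, $10$, or $20$ appearing in the theorem is then matched term by term with a shift of $f_5^5/f_1$; the integer multipliers $2$, $4$, $10$ on the right-hand sides emerge from the coefficients in the Ramanujan dissections, and \eqref{a5_main_theorem_1} sharpens the congruence \eqref{girees_a5_congruence} to an equality.

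For the recurrence \eqref{a5_main_theorem_3}, my plan is induction on $k$, starting from the $k=2$ base case $\overline{a}_5(25n) = 6\,\overline{a}_5(5n) - 5\,\overline{a}_5(n)$. Setting $A(q) = \sum \overline{a}_5(n)q^n$, $B(q) = \sum \overline{a}_5(5n)q^n$, and $C(q) = \sum \overline{a}_5(25n)q^n$, this base case is the series identity $C(q) = 6\,B(q) - 5\,A(q)$. To prove it, I extract $B$ as the zeroth $5$-section of $A$ via the dissection machinery above, and then iterate on $B$ to obtain $C$; the self-similarity is expected because, after the second dissection, all ingredients lie in a finite-dimensional space of modular functions of level $25$, so matching finitely many initial $q$-coefficients pins down the linear relation with the specific integer coefficients $-5$ and $6$. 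The inductive step is then purely algebraic: assuming the formula at level $k$ and replacing $n$ by $5n$ gives
$$\overline{a}_5(5^{k+1}n) = \tfrac{5^{k}-1}{4}\,\overline{a}_5(25n) - \tfrac{5^{k}-5}{4}\,\overline{a}_5(5n);$$
substituting the base case and simplifying, the coefficient of $\overline{a}_5(5n)$ becomes $\bigl(6(5^{k}-1) - (5^{k}-5)\bigr)/4 = (5^{k+1}-1)/4$ and the coefficient of $\overline{a}_5(n)$ becomes $-5(5^{k}-1)/4 = -(5^{k+1}-5)/4$, closing the induction.

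The principal obstacle is the $q$-product bookkeeping required to carry out the second dissection cleanly. The 2-dissection is standard, but after it one must identify, in each relevant residue class, a clean multiple of $f_5^5/f_1$ at the appropriate argument; this forces careful use of Ramanujan's 5-dissection
$$f_1 = f_{25}\bigl(R(q^{5})^{-1} - q - q^{2}\, R(q^{5})\bigr)$$
(with $R$ the Rogers--Ramanujan function) together with the analogous dissections of $\varphi$ and $\psi$. Tracking the scalar multipliers $2$, $4$, $6$, $10$ through these dissections -- and in particular recognising that the $20n+6$ and $20n+14$ classes inherit an extra factor of $5$ which both explains \eqref{girees_a5_congruence} and produces the coefficient $10$ in \eqref{a5_main_theorem_1} and \eqref{a5_main_theorem_2} -- is the most delicate part of the argument and where I expect the calculations to be the heaviest.
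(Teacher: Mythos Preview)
Your plan is viable in outline and your inductive step for \eqref{a5_main_theorem_3} is exactly the paper's, but the route to the six identities and to the base case $\overline{a}_5(25n)=6\,\overline{a}_5(5n)-5\,\overline{a}_5(n)$ differs substantially. The paper does not perform a $2$-then-$5$ dissection of $f_2 f_5^{10}/(f_1^2 f_{10}^5)$. Instead, its starting point is the single Ramanujan identity
\[
\frac{\varphi^5(-q^5)}{\varphi(-q)} \;=\; 4q\,\frac{f_5^5}{f_1} \;+\; \varphi(-q)\,\varphi^3(-q^5),
\]
which already writes the generating function of $\overline{a}_5$ as $4q$ times that of $c_5$ plus a tame correction. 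A single $5$-dissection of the lone factor $\varphi(-q)$ then gives \eqref{a5n2_thm_statement}--\eqref{a5n3_thm_statement} immediately. For \eqref{a10n1_thm_statement}--\eqref{a5_main_theorem_2} the paper takes a detour you do not mention: it introduces the $\psi$-analog $\overline{b}_5$, derives linear relations among $\overline{a}_5$, $\overline{b}_5$, $c_5$ from the identities \eqref{psimodeq} and \eqref{f5modeg}, and then invokes the vanishing $\overline{b}_5(10n+6)=\overline{b}_5(10n+8)=0$ (established separately) together with $c_5(4n+1)=c_5(2n)$. This replaces the heavy bookkeeping you anticipate with a handful of one-line substitutions; the factors $2$, $4$, $10$ drop out of the linear algebra rather than from tracking Rogers--Ramanujan coefficients.

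For the base case of \eqref{a5_main_theorem_3}, the paper again avoids any appeal to modularity or Sturm-type coefficient matching: it computes $\sum\overline{a}_5(5n)q^n$ in closed form from the $5$-dissections of $1/f_1$ and $\varphi(-q)$, subtracts, iterates once more, and closes the identity via $\varphi^2(-q)-\varphi^2(-q^5)=-4q\,\chi(-q)f_5 f_{20}$. Your finite-dimensional-space argument would certainly work once a Sturm bound is justified, but it is not needed; the relation is obtained purely by $q$-series manipulation. In short, your approach trades a few clean theta-function identities for brute-force dissection and a modular-forms black box; both reach the goal, but the paper's route is shorter and entirely elementary.
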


The following corollary is immediate from the above theorem.
\begin{corollary} For every nonnegative integer $n$ and every integer $k\geq2$,
	\begin{align}\label{girees_a5_congruence-mod10a}
		\overline{a}_5(20n+6) &\equiv 0 \pmod{10},\\
		\label{girees_a5_congruence-mod10b}\overline{a}_5(20n+14) &\equiv 0 \pmod{10},\\\intertext{and}
		4\overline{a}_5(5^{k}n) &\equiv 5\overline{a}_5(n) - \overline{a}_5(5n) \pmod{5^k}.\notag
	\end{align}
\end{corollary}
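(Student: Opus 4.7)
The plan is to derive all three congruences as direct consequences of the identities in Theorem~\ref{a5_main_theorem}, with essentially no additional machinery. Since the text advertises the corollary as ``immediate,'' my proof would emphasize bookkeeping: extracting the mod-$10$ statements from the two linear identities \eqref{a5_main_theorem_1} and \eqref{a5_main_theorem_2}, and extracting the mod-$5^k$ statement by clearing the denominator $4$ in \eqref{a5_main_theorem_3}.

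First I would handle \eqref{girees_a5_congruence-mod10a} and \eqref{girees_a5_congruence-mod10b}. The identity \eqref{a5_main_theorem_1} says $\overline{a}_5(20n+6) = 10\,c_5(10n+2)$, and since $c_5(10n+2)$ is an integer, the right-hand side is a multiple of $10$; this gives \eqref{girees_a5_congruence-mod10a} at once. The congruence \eqref{girees_a5_congruence-mod10b} follows in exactly the same way from \eqref{a5_main_theorem_2}. (As a sanity check, these sharpen the mod-$5$ congruence \eqref{girees_a5_congruence} of Gireesh, Ray, and Shivashankar on the arithmetic progression $20n+6$, and add a completely analogous statement on $20n+14$.)

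Next I would turn to the mod-$5^k$ congruence. Starting from \eqref{a5_main_theorem_3}, I would multiply both sides by $4$ to clear the fractions $(5^k-1)/4$ and $(5^k-5)/4$, yielding
\begin{equation*}
4\,\overline{a}_5(5^k n) = (5^k-1)\,\overline{a}_5(5n) - (5^k-5)\,\overline{a}_5(n).
\end{equation*}
Reducing modulo $5^k$ kills the two $5^k$-terms on the right and leaves $4\,\overline{a}_5(5^k n) \equiv -\overline{a}_5(5n) + 5\,\overline{a}_5(n) \pmod{5^k}$, which is exactly the stated congruence.

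There is really no obstacle here; the only thing to verify is that the coefficients $(5^k-1)/4$ and $(5^k-5)/4$ appearing in \eqref{a5_main_theorem_3} are honest integers (both are, since $5^k \equiv 1 \pmod 4$ for every $k\ge 1$), so that multiplying through by $4$ is a legitimate manipulation between integer-valued quantities. Once this is noted, all three parts of the corollary follow from Theorem~\ref{a5_main_theorem} by inspection.
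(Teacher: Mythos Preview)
Your proposal is correct and matches the paper's intent: the paper offers no explicit proof, simply declaring the corollary ``immediate from the above theorem,'' and your derivation is precisely the obvious one---reading off the mod-$10$ congruences from \eqref{a5_main_theorem_1} and \eqref{a5_main_theorem_2}, and clearing the denominator $4$ in \eqref{a5_main_theorem_3} before reducing mod $5^k$.
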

Note that \eqref{girees_a5_congruence-mod10a} implies \eqref{girees_a5_congruence}. However, even stronger results implying \eqref{girees_a5_congruence-mod10a} and \eqref{girees_a5_congruence-mod10b} are stated in Corollary \ref{cor-gireesh}.

Now we state some recurrence relations for $\overline{b}_5(n)$.
\begin{theorem}\label{b5_main_theorem1}
	For every nonnegative integer $n$ and every integer $k\geq2$, we have
	\begin{align}\label{b5_main_theorem_preliminary}
		\overline{b}_5(4n+3) &= 2\overline{b}_5(2n)\\\intertext{and}
		\label{b5_main_theorem_5}
		\overline{b}_5\left(5^{k}(n+3)-3\right) &= \left(\dfrac{5^{k}-1}{4}\right)\overline{b}_5(5n+12) - \left(\dfrac{5^{k}-5}{4}\right)\overline{b}_5(n).
	\end{align}
\end{theorem}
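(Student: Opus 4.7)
My overall strategy is to $p$-dissect the generating function $F(q) := \psi^5(-q^5)/\psi(-q)$, which in the $f_j$ notation equals $f_2 f_5^5 f_{20}^5 / (f_1 f_4 f_{10}^5)$, with $p=2$ for \eqref{b5_main_theorem_preliminary} and $p=5$ for \eqref{b5_main_theorem_5}.

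For \eqref{b5_main_theorem_preliminary}, I would compute a 4-dissection $F(q) = A(q^4) + q B(q^4) + q^2 C(q^4) + q^3 D(q^4)$, so that $\overline{b}_5(4n+r)$ is the $n$th coefficient of the corresponding quotient for $r = 0, 1, 2, 3$. The identity $\overline{b}_5(4n+3) = 2\overline{b}_5(2n)$ then becomes the algebraic statement $D(q) = 2\bigl(A(q^2) + q C(q^2)\bigr)$. Since $\psi^5(-q^5)$ depends only on $q^5$, it passes through the 2-dissection unchanged, and the work reduces to a 4-dissection of $1/\psi(-q) = f_2/(f_1 f_4)$. I would obtain a 2-dissection of this quotient from standard eta-quotient manipulations (for instance through Ramanujan's general 2-dissection of $f(a,b)$), iterate once more to get the 4-dissection, multiply by $\psi^5(-q^5)$, and verify the resulting eta-quotient identity.

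For \eqref{b5_main_theorem_5}, the substance sits in the single base identity, valid for all $n \geq 0$:
\begin{equation*}
\overline{b}_5(25n+72) = 6\,\overline{b}_5(5n+12) - 5\,\overline{b}_5(n).
\end{equation*}
I would prove this by performing a 5-dissection and then a further 5-dissection of $F(q)$, using Ramanujan's well-known 5-dissection formulas for $\psi$ (or equivalent manipulations of $f_j$'s), and extracting explicit $q$-product expressions for $\sum_n \overline{b}_5(5n+12)q^n$ and $\sum_n \overline{b}_5(25n+72)q^n$. Matching these against $F(q)$ yields the stated linear combination.

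Once the base identity is in hand, the rest is automatic. Substituting $n \mapsto 5^{k-1}(n+3) - 3$ (which is a non-negative integer whenever $n \geq 0$ and $k \geq 1$) in the base identity converts it into the Hecke-like three-term recurrence
\begin{equation*}
\overline{b}_5\bigl(5^{k+1}(n+3) - 3\bigr) = 6\,\overline{b}_5\bigl(5^{k}(n+3) - 3\bigr) - 5\,\overline{b}_5\bigl(5^{k-1}(n+3) - 3\bigr)
\end{equation*}
for all $k \geq 1$, under the convention $5^0(n+3) - 3 = n$. The characteristic polynomial $x^2 - 6x + 5 = (x-1)(x-5)$ has roots $1$ and $5$, so for fixed $n$ the sequence $a_k := \overline{b}_5(5^k(n+3) - 3)$ has the form $\alpha + \beta \cdot 5^k$; solving with $a_0 = \overline{b}_5(n)$ and $a_1 = \overline{b}_5(5n+12)$ gives precisely the closed form in \eqref{b5_main_theorem_5}. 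The main obstacle throughout is the explicit $p$-dissection of $F(q)$: a 4-dissection of $1/\psi(-q)$ for part (i), and an iterated 5-dissection of the full eta-quotient for part (ii); everything after that is book-keeping.
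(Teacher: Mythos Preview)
Your plan for \eqref{b5_main_theorem_preliminary} contains a genuine error. You claim that ``since $\psi^5(-q^5)$ depends only on $q^5$, it passes through the 2-dissection unchanged,'' and hence that the 4-dissection of $F(q)=\psi^5(-q^5)/\psi(-q)$ reduces to that of $1/\psi(-q)$. This is false: a power series in $q^5$ is not a series in $q^2$ (or $q^4$); the exponents $5\binom{n+1}{2}$ in $\psi(-q^5)$ hit every residue class modulo $4$, so $\psi^5(-q^5)$ mixes nontrivially with both the even and odd parts of $1/\psi(-q)$. The 4-dissection of the product does not factor in the way you describe, and the proposed reduction collapses. The paper avoids this by using a ready-made modular equation,
\[
\frac{\psi^5(-q^5)}{\psi(-q)}-\frac{\psi^5(q^5)}{\psi(q)}=4q^3\frac{\psi^5(q^{10})}{\psi(q^2)}+2q\frac{f_{20}^5}{f_4},
\]
which expresses the odd part of $F(q)$ directly in terms of $F(q^2)$ and a $c_5$ generating function; extracting the coefficient of $q^{4n+3}$ gives \eqref{b5_main_theorem_preliminary} immediately. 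Your approach could be repaired by dissecting the full product $F(q)$ (not just $1/\psi(-q)$), but that amounts to rediscovering this identity.

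Your plan for \eqref{b5_main_theorem_5} is correct and matches the paper's route: establish the base case $\overline{b}_5(25n+72)=6\,\overline{b}_5(5n+12)-5\,\overline{b}_5(n)$ by iterated 5-dissection, then pass to general $k$. The paper makes the first step concrete by rewriting $qF(q)=f_{10}^5/f_2-\psi(-q)\psi^3(-q^5)$ via the identity $\psi^2(-q)+q\psi^2(-q^5)=f(q,-q^4)f(-q^2,q^3)$ and then applying the standard 5-dissections of $\psi(-q)$ and of $1/f_2$; your sketch leaves this mechanism unspecified, but the target identity is right. Your closed-form derivation via the characteristic polynomial $x^2-6x+5=(x-1)(x-5)$ is equivalent to the paper's induction on $k$.
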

Next we state some identities connecting $\overline{b}_5(n)$ with $\overline{a}_5(n)$ and $c_5(n)$.
\begin{theorem}\label{b5_main_theorem}
	For every nonnegative integer $n$, we have
	\begin{align}\label{b5_main_theorem_preliminary1}
		\overline{b}_5(4n+1) &= c_5(n)-2\overline{b}_5(2n-1),\\
		\overline{b}_5(10n) &= \dfrac{1}{2}c_5(10n+2) ,\label{b5_10n}\\
		\overline{b}_5(10n+1) &= c_5(5n+1),\label{b5_10n_1}\\
		\overline{b}_5(10n+2) &= \dfrac{1}{4}\overline{a}_5(2n+1) +\dfrac{1}{2}c_5(2n), \label{b5_10n_2}\\
		\overline{b}_5(10n+3) &= c_5(5n+2),\label{b5_10n_3}\\
		\overline{b}_5(10n+4) &= \dfrac{1}{2}c_5(10n+6), \label{b5_10n_4}\\
		\overline{b}_5(10n+6) &= 0, \label{b5_10n_6}\\
		\overline{b}_5(10n+8) &= 0, \label{b5_10n_8}\\
		\overline{b}_5(20n+5) &= -c_5(5n+1), \label{b5_20n_5}\\
		\overline{b}_5(20n+7) &= \dfrac{1}{2}\overline{a}_5(2n+1) +c_5(2n), \label{b5_20n_7}\\
		\overline{b}_5(20n+9) &= -c_5(5n+2), \label{b5_20n_9}\\
		\overline{b}_5(20n+15) &= 0, \label{b5_20n_15}\\ 
		\overline{b}_5(20n+19) &= 0. \label{b5_20n_19}
	\end{align}
\end{theorem}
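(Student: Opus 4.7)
The strategy is to derive all identities by dissecting the generating function
$$\sum_{n=0}^{\infty} \overline{b}_5(n) q^n = \frac{\psi^5(-q^5)}{\psi(-q)}$$
along arithmetic progressions modulo $2$, $4$, $10$, and $20$. Since $\psi^5(-q^5)$ is already a series in $q^5$, the $5$-divisible dissections are controlled entirely by the corresponding dissection of $1/\psi(-q)$, after multiplication by the fixed factor $\psi^5(-q^5)$.

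The first step is to rewrite $1/\psi(-q)$ in a form amenable to dissection. Using the product expressions $\psi(q) = f_2^2/f_1$ and $\psi(-q) = f_1 f_4/f_2$, one obtains $\psi(q)\psi(-q) = f_2 f_4$, and hence
$$\frac{1}{\psi(-q)} = \frac{\psi(q)}{f_2 f_4}.$$
A $2$-dissection of $\psi(q)$ (via, for instance, the standard splitting $\psi(q) = f(q^3,q^6) + q f(q,q^9)$) then gives a $2$-dissection of $1/\psi(-q)$; multiplying by $\psi^5(-q^5)$ separates $\overline{b}_5(2n)$ from $\overline{b}_5(2n+1)$. Iterating the $2$-dissection on the odd piece isolates $\overline{b}_5(4n+1)$ and $\overline{b}_5(4n+3)$, and matching the coefficient of $q^{4n+1}$ with the generating function $f_5^5/f_1$ for $c_5$ yields \eqref{b5_main_theorem_preliminary1} directly.

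Next I would perform a $5$-dissection by applying Ramanujan's classical $5$-dissection of $\psi(q)$ to the numerator of $\psi(q)/(f_2 f_4)$, and then multiplying back by $\psi^5(-q^5)$. This produces explicit theta-function expressions for $\sum_{n} \overline{b}_5(5n+r) q^{5n+r}$ for $r=0,\dots,4$. Intersecting with the $2$-dissection yields the mod-$10$ identities \eqref{b5_10n}--\eqref{b5_10n_8}; a further $2$-dissection of the residues $1$, $3$ mod $10$ and of the residue $2$ mod $10$ produces the mod-$20$ identities \eqref{b5_20n_5}--\eqref{b5_20n_19}. In each case the right-hand side is identified using the generating function $f_5^5/f_1$ for $c_5$ from \eqref{ctf} and the definition of $\overline{a}_5$, together with auxiliary $5$-dissections of $f_5^5/f_1$ and $\varphi^5(-q^5)/\varphi(-q)$ that are already established in the proof of Theorem \ref{a5_main_theorem}. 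The vanishing identities \eqref{b5_10n_6}, \eqref{b5_10n_8}, \eqref{b5_20n_15}, and \eqref{b5_20n_19} will appear as dissection terms whose $q$-product coefficients are identically zero.

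The main obstacle is the volume of bookkeeping: thirteen identities, each requiring that two nontrivial $q$-product expressions be shown to agree. The pivotal subtask is the $5$-dissection of $1/\psi(-q)$, since $\psi(-q)$ does not admit as clean a $5$-dissection as $\psi(q)$, forcing the detour through $\psi(q)$ via $\psi(q)\psi(-q)=f_2 f_4$. Once that dissection is in hand, the remaining identifications against $c_5$ and $\overline{a}_5$ reduce to routine manipulations in the $f_j$ calculus.
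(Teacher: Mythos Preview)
Your route differs substantially from the paper's, and the central step you flag as ``pivotal'' is precisely where the gap lies. The paper never $5$-dissects $1/\psi(-q)$. Instead it uses the Ramanujan identity \eqref{psimodeqforb5} (equivalently $\psi^2(-q)+q\psi^2(-q^5)=f_{10}^5/(f_2\psi^3(-q^5))$) to rewrite the generating function as
\[
q\sum_{n\ge0}\overline{b}_5(n)q^{n}=\frac{f_{10}^5}{f_2}-\psi(-q)\psi^3(-q^5),
\]
so that $\psi(-q)$ sits in the \emph{numerator}, where the $5$-dissection \eqref{psi_5dissection} applies directly; this single move yields \eqref{b5_10n_1}, \eqref{b5_10n_3}, \eqref{b5_10n_6}, \eqref{b5_10n_8} at once. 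For \eqref{b5_main_theorem_preliminary1} the paper uses the separate identity \eqref{psimodeq}, and for the links to $\overline{a}_5$ and $c_5$ in \eqref{b5_10n}, \eqref{b5_10n_2}, \eqref{b5_10n_4}, \eqref{b5_20n_7} it relies on \eqref{f5modeg} and \eqref{A+4B}, which furnish the three-way relation \eqref{abc-i} among $\overline{a}_5$, $\overline{b}_5$, $c_5$. Your proposal invokes none of these identities, and without them it is not clear how the right-hand sides involving $c_5$ and $\overline{a}_5$ would ever emerge from a pure dissection of $\psi(q)/(f_2f_4)$.

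Concretely, your detour $1/\psi(-q)=\psi(q)/(f_2f_4)$ does not solve the $5$-dissection problem: $f_2$ and $f_4$ live in $q^2$ and $q^4$, so their reciprocals $5$-dissect only modulo $10$ and $20$, and combining those nine-term expansions with the three-term $5$-dissection of $\psi(q)$ gives an unmanageable product rather than the clean single-term residues the theorem asserts. (Also, the $2$-dissection you quote, $\psi(q)=f(q^3,q^6)+qf(q,q^9)$, is not correct; the standard one is $\psi(q)=f(q^6,q^{10})+qf(q^2,q^{14})$.) In short, the paper's argument is not brute-force dissection but a sequence of targeted modular-equation substitutions; your plan omits exactly those inputs and, as written, would not reach identities such as \eqref{b5_main_theorem_preliminary1} or \eqref{b5_20n_7} that mix $\overline{b}_5$ with $c_5$ and $\overline{a}_5$.
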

\begin{corollary} For positive integers $n$, $\overline{b}_5(n)$ is $0$ for at least $30\%$, greater than $0$ for at least $52\%$, and less than $0$ for at least $10\%$.
\end{corollary}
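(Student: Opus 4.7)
The plan is to classify positive integers $n$ by their residue modulo $20$, refining to modulo $100$ in three borderline cases, and to read off the sign of $\overline{b}_5(n)$ from the identities in Theorem~\ref{b5_main_theorem}. I would rely on two standing facts throughout: first, $c_5(m)\ge 1$ for every $m\ge 0$, by Granville--Ono's theorem~\cite{go}; second, $\overline{a}_5(m)\ge 0$ whenever $m$ lies in a residue class covered by \eqref{a5n2_thm_statement}, \eqref{a5n3_thm_statement}, \eqref{a10n1_thm_statement}, or \eqref{a10n9_thm_statement}, since in those classes $\overline{a}_5$ is visibly a nonnegative multiple of $c_5$.

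First I would dispose of the zero and negative cases. The identities \eqref{b5_10n_6}, \eqref{b5_10n_8}, \eqref{b5_20n_15}, \eqref{b5_20n_19} give $\overline{b}_5(n)=0$ whenever $n$ is congruent to one of $6,8,15,16,18,19$ modulo $20$, a density of $6/20=30\%$. The identities \eqref{b5_20n_5} and \eqref{b5_20n_9} give $\overline{b}_5(n)=-c_5(\,\cdot\,)\le -1$ for $n\equiv 5,9\pmod{20}$, accounting for $2/20=10\%$ of negative values. For positivity I would first use \eqref{b5_10n}, \eqref{b5_10n_1}, \eqref{b5_10n_3}, and \eqref{b5_10n_4}, each of which writes $\overline{b}_5(n)$ as a positive rational multiple of some $c_5(\,\cdot\,)$, so that $\overline{b}_5(n)>0$ whenever $n\equiv 0,1,3,4,10,11,13,14\pmod{20}$. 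That handles $8/20=40\%$ of positive values.

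The hard part is closing the gap from $40\%$ to $52\%$. The three residue classes $n\equiv 2, 7, 12\pmod{20}$, handled by \eqref{b5_10n_2} and \eqref{b5_20n_7}, yield $\overline{b}_5(n)=\alpha\,\overline{a}_5(r)+\beta\,c_5(s)$ with $\alpha,\beta>0$, $c_5(s)\ge 1$, and $r$ odd, so positivity would be immediate \emph{if} I had $\overline{a}_5(r)\ge 0$. Since I do not have a blanket nonnegativity statement for $\overline{a}_5$, I would subdivide each of these three classes further modulo $5$ (equivalently, work modulo $100$). A direct check of $r\bmod 10$ shows that in four of the five resulting sub-residues $r$ lies in $\{1,3,7,9\}\pmod{10}$, where the identities \eqref{a5n2_thm_statement}--\eqref{a10n9_thm_statement} force $\overline{a}_5(r)\ge 0$ and hence $\overline{b}_5(n)>0$. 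The one exception per class, in which $r\equiv 5\pmod{10}$, is left unresolved.

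Adding up modulo $100$: the confirmed positive residues number $8\cdot 5+3\cdot 4=52$, the zeros number $6\cdot 5=30$, and the negatives number $2\cdot 5=10$, yielding the densities $52\%$, $30\%$, and $10\%$ asserted in the corollary. The remaining eight residues modulo $100$---the five lying over $17\pmod{20}$ together with one exceptional sub-residue from each of $2,7,12\pmod{20}$---are left unclassified, but this does not affect the stated lower bounds.
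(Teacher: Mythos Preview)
Your proof is correct and follows essentially the same approach as the paper's own argument: both use \eqref{b5_10n_6}, \eqref{b5_10n_8}, \eqref{b5_20n_15}, \eqref{b5_20n_19} for the $30\%$ zeros, \eqref{b5_20n_5}, \eqref{b5_20n_9} for the $10\%$ negatives, and the identities \eqref{b5_10n}--\eqref{b5_10n_4}, \eqref{b5_20n_7} together with the fact (from \eqref{a5n2_thm_statement}--\eqref{a10n9_thm_statement}) that $\overline{a}_5(2n+1)$ is positive in at least $4$ out of $5$ residue classes to reach $52\%$ positives. Your explicit refinement to residues modulo $100$ and your invocation of Granville--Ono for $c_5(m)\ge 1$ simply make precise what the paper's proof leaves implicit.
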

\begin{proof}
	Identities \eqref{b5_10n_6}, \eqref{b5_10n_8}, \eqref{b5_20n_15}, and \eqref{b5_20n_19} readily imply the observed frequency of zeroes. Similarly, \eqref{b5_20n_5} and \eqref{b5_20n_9} imply the frequency of negatives. From the identities of \eqref{a5n2_thm_statement}, \eqref{a5n3_thm_statement}, \eqref{a10n1_thm_statement}, and \eqref{a10n9_thm_statement}, we observe that the sequence $(\overline{a}_5(2n+1))$ is positive in at least 4 out of 5 cases. Together with \eqref{b5_10n}--\eqref{b5_10n_4} and \eqref{b5_20n_7}, this implies that the frequency of positives is at least equal to 
$$\dfrac{2+2+2\times(4/5)+2+2+1\times(4/5)}{20},$$ that is, 52\%.
\end{proof}

From \eqref{a5_main_theorem_1}, \eqref{a5_main_theorem_2}, \eqref{b5_10n}, and \eqref{b5_10n_4} we arrive at the following corollary, implying the congruence of \eqref{girees_a5_congruence} by Gireesh, Ray, and Shivashankar \cite[Thm.\ 1.10]{GRS}.
\begin{corollary}\label{cor-gireesh}
	For $n$ being any non-negative integer,
	\begin{align}
		\overline{a}_5(20n+6) = 20\overline{b}_5(10n), \\ 
		\overline{a}_5(20n+14) = 20\overline{b}_5(10n+4).
	\end{align}
\end{corollary}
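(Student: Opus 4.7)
The plan is to derive the two claimed identities by mechanically eliminating the intermediate $c_5$ values via substitution between pairs of formulas already established in Theorems \ref{a5_main_theorem} and \ref{b5_main_theorem}. Specifically, I would pair \eqref{a5_main_theorem_1} with \eqref{b5_10n} for the first identity, and \eqref{a5_main_theorem_2} with \eqref{b5_10n_4} for the second, treating each $c_5$ quantity as a common pivot.

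For the first identity, I would start from \eqref{a5_main_theorem_1}, which reads $\overline{a}_5(20n+6) = 10\,c_5(10n+2)$. From \eqref{b5_10n} we have $\overline{b}_5(10n) = \tfrac{1}{2}c_5(10n+2)$, and rearranging gives $c_5(10n+2) = 2\,\overline{b}_5(10n)$. Substituting yields $\overline{a}_5(20n+6) = 10\cdot 2\,\overline{b}_5(10n) = 20\,\overline{b}_5(10n)$. The second identity proceeds symmetrically: \eqref{a5_main_theorem_2} provides $\overline{a}_5(20n+14) = 10\,c_5(10n+6)$, and \eqref{b5_10n_4} rearranges to $c_5(10n+6) = 2\,\overline{b}_5(10n+4)$, so combining gives $\overline{a}_5(20n+14) = 20\,\overline{b}_5(10n+4)$.

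There is essentially no obstacle in this proof: once the four input identities from the preceding theorems are granted, the corollary is a one-line substitution for each part. The only thing to keep track of is that the factor $10$ coming from the $\overline{a}_5$ identities, multiplied by the factor $2$ produced when inverting the $\overline{b}_5$ identities, combines to exactly $20$, matching the stated coefficient. Accordingly, the proof will be short and essentially computational, with all the substantive work carried out earlier when Theorems \ref{a5_main_theorem} and \ref{b5_main_theorem} are established.
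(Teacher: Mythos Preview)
Your proposal is correct and matches the paper's own approach exactly: the paper states that the corollary follows directly from \eqref{a5_main_theorem_1}, \eqref{a5_main_theorem_2}, \eqref{b5_10n}, and \eqref{b5_10n_4}, which is precisely the pairing-and-substitution you describe.
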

We sate some infinite families of congruences in the following corollary. 
\begin{corollary}
	For every nonnegative integer $n$ and every integer $k\geq2$,
	\begin{align*}
		4\overline{b}_5\left(5^{k}(n+3)-3\right) &\equiv 5\overline{b}_5(n) - \overline{b}_5(5n+12) \pmod{5^k},\\\overline{b}_5(5^k(20n+18)-3) &\equiv 0 \pmod{\dfrac{5^{k}-1}{4}},\\\intertext{and}
		\overline{b}_5(5^k(20n+22)-3) &\equiv 0 \pmod{\dfrac{5^{k}-1}{4}}.		
	\end{align*}
\end{corollary}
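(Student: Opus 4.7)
The plan is to derive all three congruences directly from the single recurrence \eqref{b5_main_theorem_5}, using the vanishing identities \eqref{b5_20n_15} and \eqref{b5_20n_19} to kill the second term for the divisibility claims. The three assertions are formally of the same shape, so I would handle them in parallel.

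For the first congruence, I would multiply \eqref{b5_main_theorem_5} through by $4$ to clear denominators, obtaining
\begin{equation*}
4\overline{b}_5\bigl(5^{k}(n+3)-3\bigr) = (5^{k}-1)\overline{b}_5(5n+12) - (5^{k}-5)\overline{b}_5(n),
\end{equation*}
and then reduce modulo $5^{k}$. Since $5^{k}-1\equiv -1$ and $5^{k}-5\equiv -5\pmod{5^{k}}$, the right-hand side collapses to $5\overline{b}_5(n)-\overline{b}_5(5n+12)$, which is exactly the claim.

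For the remaining two congruences the key observation is that the second summand on the right-hand side of \eqref{b5_main_theorem_5} carries the factor $\overline{b}_5(n)$, so whenever $n$ falls in a residue class where $\overline{b}_5$ is known to vanish, the recurrence reduces to $\overline{b}_5(5^{k}(n+3)-3) = \frac{5^{k}-1}{4}\overline{b}_5(5n+12)$, giving divisibility of the left side by $\frac{5^{k}-1}{4}$. To hit the index $5^{k}(20n+18)-3$, I would specialize \eqref{b5_main_theorem_5} at $n\mapsto 20n+15$, noting that $20n+15+3=20n+18$; then \eqref{b5_20n_15} gives $\overline{b}_5(20n+15)=0$, killing the second term. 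Similarly, to hit $5^{k}(20n+22)-3$, I would specialize at $n\mapsto 20n+19$, and invoke \eqref{b5_20n_19} to kill the second term.

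There is no real obstacle here, since both \eqref{b5_main_theorem_5} and the vanishing identities \eqref{b5_20n_15}, \eqref{b5_20n_19} are already established in Theorems \ref{b5_main_theorem1} and \ref{b5_main_theorem}; the only care needed is choosing the correct substitution $n\mapsto 20n+15$ (respectively $n\mapsto 20n+19$) so that the shifted argument $5^{k}(n+3)-3$ matches the targets $5^{k}(20n+18)-3$ and $5^{k}(20n+22)-3$ in the statement.
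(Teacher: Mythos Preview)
Your proposal is correct and follows essentially the same route as the paper: the first congruence is obtained by clearing denominators in \eqref{b5_main_theorem_5} and reducing modulo $5^{k}$, and the last two by specializing \eqref{b5_main_theorem_5} at $n\mapsto 20n+15$ and $n\mapsto 20n+19$ and invoking \eqref{b5_20n_15}, \eqref{b5_20n_19} to eliminate the $\overline{b}_5(n)$ term, leaving $\overline{b}_5(5^{k}(20n+18)-3)=\tfrac{5^{k}-1}{4}\,\overline{b}_5(100n+87)$ and $\overline{b}_5(5^{k}(20n+22)-3)=\tfrac{5^{k}-1}{4}\,\overline{b}_5(100n+107)$.
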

\begin{proof}
	The first congruence readily follows from \eqref{b5_main_theorem_5}. Again, from \eqref{b5_20n_15}, \eqref{b5_20n_19} and \eqref{b5_main_theorem_5} it follows that, for every nonnegative integer $n$ and every integer $k\geq2$,
	\begin{align*}
		\overline{b}_5(5^k(20n+18)-3) &= \left(\dfrac{5^{k}-1}{4}\right)\overline{b}_5(100n+87),\\
		\overline{b}_5(5^k(20n+22)-3) &= \left(\dfrac{5^{k}-1}{4}\right)\overline{b}_5(100n+107),
	\end{align*}
	which implies the last two congruences in the corollary.
\end{proof}

We arrange the rest of the paper as follows. In Section \ref{analogues-sec2}, we provide some preliminary lemmas. Section \ref{analogues-sec3} is devoted to proving the identities stated in Theorem \ref{a5_main_theorem}. The proofs of Theorem \ref{b5_main_theorem1} and Theorem \ref{b5_main_theorem} are given in Section \ref{analogues-sec4} and Section \ref{analogues-sec5}, respectively. 

\section{Preliminary Lemmas}\label{analogues-sec2}

In the following lemma, we state some known theta function identities.
\begin{lemma}If $\varphi(-q)$, $\psi(-q)$, and $f(-q)$ are as defined in \eqref{phi-q}--\eqref{f-q} and $\chi(-q):=(q;q^2)_\infty$, then 
	\begin{align}
		\label{phimodeq}\dfrac{\varphi^5(q^5)}{\varphi(q)} + 4q\dfrac{f^5(q^5)}{f(q)} &= \varphi(q) \varphi^3(q^5),\\
		\label{phimodeqfora5}\varphi^2(q) - \varphi^2(q^5) &= 4q\chi(q)f_{5}f_{20},\\
		\label{psimodeq}\dfrac{\psi^5(-q^5)}{\psi(-q)} - \dfrac{\psi^5(q^5)}{\psi(q)} &= 4q^3\dfrac{\psi^5(q^{10})}{\psi(q^2)} + 2q\dfrac{f_{20}^5}{f_4},\\
		\label{psimodeqforb5}\psi^2(q) - q\psi^2(q^5) &= \dfrac{f(-q^5) \varphi(-q^5)}{\chi(-q)}=f(q,q^4)f(q^2,q^3),\\
		\label{f5modeg}\dfrac{f_5^5}{f_1} - 4q^3\dfrac{f_{20}^5}{f_4} &= \dfrac{f^5(q^5)}{f(q)} + 2q\dfrac{f_{10}^5}{f_2},\\
		\label{A+4B}\dfrac{f_2^2}{f_1^4}&=\dfrac{f_{10}^2}{f_5^4}+4q\dfrac{f_2f_{10}^5}{f_1^3f_5^5}.
	\end{align}
\end{lemma}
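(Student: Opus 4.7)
All six identities are classical theta function relations that follow from Jacobi's triple product \eqref{JTPI} together with Ramanujan's $5$-dissections of $\varphi$, $\psi$, and $f$. My plan is to organise the proof into three clusters, each requiring a different amount of bookkeeping.

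The ``quadratic'' identities \eqref{phimodeqfora5} and \eqref{psimodeqforb5} come first. For \eqref{phimodeqfora5}, I would start from the double sum $\varphi^2(q)=\sum_{m,n\in\mathbb Z}q^{m^2+n^2}$ and split $\mathbb Z^2$ by the residue class of $m+2n$ modulo $5$ (the rotation trick exploiting $5=1^2+2^2$). The residue $0$ contributes exactly $\varphi^2(q^5)$, while the four non-zero residues pair off and, after an index shift, combine into a single theta product that \eqref{JTPI} identifies as $4q\chi(q)f_5f_{20}$. The same method applied to $\psi^2(q)=\sum_{m,n\ge0}q^{m(m+1)/2+n(n+1)/2}$ (after completing the square in the exponent) yields $\psi^2(q)-q\psi^2(q^5)$ as a sum of two partial theta series that recombine to $f(q,q^4)f(q^2,q^3)$; the second equality in \eqref{psimodeqforb5} is an immediate infinite-product manipulation.

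The ``quintic'' identities \eqref{phimodeq}, \eqref{psimodeq}, and \eqref{f5modeg} all share the shape $X^5(q^5)/X(q)\pm cq^aY^5(q^{5r})/Y(q^r)=\text{theta product}$. My approach here is to invoke Ramanujan's $5$-dissection
\[
\varphi(-q)=\varphi(-q^{25})-2qf(-q^{15},-q^{35})+2q^4f(-q^5,-q^{45}),
\]
together with the analogous decompositions of $\psi(-q)$ and $f(-q)$ recorded in Berndt \cite{BCB3}. Writing each side of the target identity in terms of these $5$-dissections, clearing denominators and extracting the coefficients in the $5$-arithmetic progression of exponents, reduces everything to a product identity that \eqref{JTPI} verifies directly. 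Finally, identity \eqref{A+4B} is the classical relation underlying the Rogers--Ramanujan continued fraction; I would quote it from Berndt \cite{BCB3} or derive it by squaring the known $5$-dissection of $1/f_1$.

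The main obstacle is the bookkeeping in the quintic cluster: expanding a three-term $5$-dissection to the fifth power produces many cross-terms, and one must confirm that all but the prescribed residue classes cancel. Before committing to a symbolic proof I would sanity-check each identity by matching $q$-series coefficients to order $q^{30}$, which pins down the product on the right-hand side uniquely and makes the algebraic rearrangement essentially mechanical.
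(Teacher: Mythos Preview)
Your proposal and the paper take entirely different routes: the paper does not actually prove any of these six identities but simply cites them from the literature (Berndt's edition of Ramanujan's Notebooks, Parts~III and~V, for \eqref{phimodeq}--\eqref{psimodeqforb5}; Baruah--Berndt for \eqref{f5modeg}; Baruah--Begum for \eqref{A+4B}). What you sketch instead is a self-contained derivation from Jacobi's triple product and the $5$-dissections, which is a legitimate and standard strategy---your rotation trick using $5=1^2+2^2$ is exactly how \eqref{phimodeqfora5} and \eqref{psimodeqforb5} are classically obtained, and the $5$-dissection approach to the quintic relations is in the spirit of Ramanujan's own methods. The trade-off is clear: citing the identities costs nothing but relies on the reader having access to Berndt; your route is self-contained but, as you acknowledge, the quintic cluster requires heavy bookkeeping (and \eqref{psimodeq} in particular mixes $\psi(\pm q)$ and $\psi(q^2)$ in a way that a single $5$-dissection does not resolve cleanly---you would likely need a $2$-dissection step on top). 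For \eqref{A+4B} you effectively fall back on citation anyway, so there is no real difference there. Since these identities are used only as preliminary tools, the paper's citation approach is appropriate; your outline would be the right choice if the goal were an expository or fully self-contained treatment.
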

\begin{proof} Identities \eqref{phimodeq} and \eqref{phimodeqfora5} are identical to Entry 9(ii) and Entry 9(iii) of \cite[Chap.\ 19]{BCB3}. For the proofs of \eqref{psimodeq} and \eqref{psimodeqforb5}, we refer to Entry 15 and Entry 18 of \cite[Chap.\ 36]{BCB5}. The identity \eqref{f5modeg} can be found in \cite[Eq.\ (4.7)]{NDB-BCB}. Identity \eqref{A+4B} is simply \cite[Eq.\ (2.6)]{NDB-NMB}.
\end{proof}

In the following lemma, we recall some 5-dissection formulas from \cite[p.\ 85, Eq.\ (8.1.1) and p.\ 89, Eq.\ (8.4.4)]{H_QBook} and \cite[p.\ 49, Corollary]{BCB3}.
\begin{lemma}
	Let $R(q)$ be defined as
	\begin{equation*}
		R(q) := \dfrac{(q;q^5)_{\infty}(q^4;q^5)_{\infty}}{(q^2;q^5)_{\infty}(q^3;q^5)_{\infty}}.
	\end{equation*}
	We have
	\begin{align}
		f_1 &= \dfrac{1}{R(q^{5})} -q -q^2 R(q^{5}),\label{f1_5dissection}\\
		\dfrac{1}{f_1} &= \dfrac{f_{25}^5}{f_{5}^6}\Big(R(q^{5})^{-4} + qR(q^{5})^{-3} + 2q^{2}R(q^{5})^{-2} + 3q^{3}R(q^{5})^{-1} + 5q^{4} \nonumber\\
		&\quad - 3q^{5}R(q^{5}) + 2q^{6}R(q^{5})^{2} - q^{7}R(q^{5})^{3} + q^{8}R(q^{5})^{4} \Big),\label{partition_5dissection}\\
		\varphi(q) &= \varphi(q^{25}) + 2qf(q^{15}, q^{35}) + 2q^{4}f(q^{5}, q^{45}),\label{phi_5dissection}\\
		\psi(q) &= f(q^{10}, q^{15}) + qf(q^5, q^{20}) + q^3\psi(q^{25}) \label{psi_5dissection}.
	\end{align}	
\end{lemma}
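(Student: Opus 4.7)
My plan is to derive each of the four identities by splitting an appropriate infinite sum or infinite product into residue classes modulo $5$ and then re-identifying each piece by means of Jacobi's triple product \eqref{JTPI} together with the product form of the Rogers--Ramanujan continued fraction $R(q)$.

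For \eqref{f1_5dissection}, I would begin from Euler's pentagonal number expansion
$$
f_1 = \sum_{n=-\infty}^{\infty}(-1)^n q^{n(3n-1)/2}.
$$
A direct check on $n \bmod 5$ shows that the exponent $n(3n-1)/2$ reduces modulo $5$ to only the three values $0, 1, 2$. Collecting the three resulting theta sub-sums and applying \eqref{JTPI} reassembles each as a product of the form $f(-q^{5a}, -q^{5b})$ in the base $q^{25}$; using the product definition of $R$ to identify $R(q^5) = f(-q^5, -q^{20})/f(-q^{10}, -q^{15})$, the three pieces collapse (after dividing through by the common theta factor) to $1/R(q^5) - q - q^2 R(q^5)$.

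For \eqref{partition_5dissection}, I would rationalize $1/f_1$ via a Galois-conjugate trick: substitute $q \mapsto \zeta_5^k q$ ($k=0,1,2,3,4$) in \eqref{f1_5dissection}, note that $R((\zeta_5^k q)^5) = R(q^5)$, and multiply the five resulting conjugates. On one hand, the infinite product $\prod_{k=0}^{4} f_1(\zeta_5^k q)$ evaluates, via $\prod_\zeta (1 - \zeta^n q^n) = 1 - q^{5n}$ for $\gcd(n,5)=1$ and $(1-q^n)^5$ for $5 \mid n$, to an explicit closed form in $f_5$ and $f_{25}$. On the other hand, the five-fold product of the Galois conjugates of the RHS of \eqref{f1_5dissection} expands, via the power-sum identities for fifth roots of unity, to Ramanujan's expression $R(q^5)^{-5} - 11q^5 - q^{10} R(q^5)^5$. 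Dividing by $f_1$ (the $k=0$ factor) expresses $1/f_1$ as the quartic product $\prod_{k=1}^{4}(1/R(q^5) - \zeta_5^k q - \zeta_5^{2k} q^2 R(q^5))$ times the corresponding ratio in $f_5$ and $f_{25}$; expanding the quartic and collecting by powers of $q$ then produces the stated polynomial in $R(q^5)$ with prefactor $f_{25}^5/f_5^6$.

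For \eqref{phi_5dissection} and \eqref{psi_5dissection}, I would apply the residue-class split directly to the sum representations in \eqref{phi-q} and \eqref{psi}. For $\varphi$, writing $n = 5m+r$ makes the exponent $n^2 = 25m^2 + 10mr + r^2$; using the symmetry $n \to -n$ to pair residues $\pm r$ identifies the three surviving classes, after applying \eqref{JTPI}, as $\varphi(q^{25})$, $2qf(q^{15}, q^{35})$, and $2q^4 f(q^5, q^{45})$. The analogous residue-class split for $\psi(q) = \sum_{n \geq 0} q^{n(n+1)/2}$ by $n \bmod 5$, together with suitable index shifts so each class's summation runs over all integers, reassembles the pieces via \eqref{JTPI} into $f(q^{10}, q^{15}) + q f(q^5, q^{20}) + q^3 \psi(q^{25})$. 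The main obstacle in the whole lemma is the detailed bookkeeping in \eqref{partition_5dissection}: correctly expanding the degree-$8$ quartic product of Galois conjugates as a polynomial in $q$ whose nine coefficients are exactly the stated signed integer multiples of powers of $R(q^5)$ (in particular the central coefficient $5q^4$), and tracking the cancellations that produce the $f_{25}^5/f_5^6$ prefactor; everything else reduces to routine residue-class accounting once \eqref{JTPI} and the Rogers--Ramanujan product form of $R$ are available.
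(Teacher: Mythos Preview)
The paper does not prove this lemma at all: it simply recalls the identities from Hirschhorn's book and Berndt's \emph{Ramanujan's Notebooks, Part III}. Your proposal, by contrast, actually sketches the standard proofs---residue-class splitting of the pentagonal, square, and triangular number sums combined with Jacobi's triple product for \eqref{f1_5dissection}, \eqref{phi_5dissection}, and \eqref{psi_5dissection}, and the Galois-conjugate rationalization for \eqref{partition_5dissection}. These are exactly the arguments one finds in the cited sources, so methodologically you are in line with the literature the paper defers to; you are simply doing more work than the paper itself does.

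One small caution: in your derivation of \eqref{f1_5dissection}, the ``common theta factor'' that emerges from the three residue-class sub-sums is $f_{25}$, and it does \emph{not} divide out---the correct identity is $f_1 = f_{25}\bigl(R(q^5)^{-1} - q - q^2 R(q^5)\bigr)$. The statement as printed in the paper is missing this $f_{25}$ factor (a typo), and your own argument for \eqref{partition_5dissection} implicitly uses the corrected form, since otherwise the product $\prod_{k=0}^{4} f_1(\zeta_5^k q) = f_5^6/f_{25}$ would not match $f_{25}^5$ times the quintic $R(q^5)^{-5} - 11q^5 - q^{10}R(q^5)^5$ and the prefactor $f_{25}^5/f_5^6$ would not emerge.
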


In the following lemma, we present two useful identities on $c_5(n)$. 
\begin{lemma}
	For every nonnegative integer $n$,
	\begin{align}
		c_5(4n+1) &= c_5(2n), \label{c4n1}\\
		c_5(5n+4) &= 5c_5(n). \label{c5n4}
	\end{align}	
\end{lemma}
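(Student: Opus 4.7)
The plan is to prove the two identities by separate dissection arguments: \eqref{c5n4} follows from a routine $5$-dissection using \eqref{partition_5dissection}, whereas \eqref{c4n1} requires a more delicate $2$-dissection based on \eqref{f5modeg}, combined with a key observation that $f^5(q^5)/f(q)$ is precisely the generating function of $c_5(n)$ evaluated at $-q$.

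For \eqref{c5n4}, I would multiply \eqref{partition_5dissection} throughout by $f_5^5$. In the resulting expansion of $f_5^5/f_1$, the nine summands inside the parenthesis take the form $c_k q^k R(q^5)^{j_k}$ for $k\in\{0,1,\ldots,8\}$. Since $R(q^5)$ and $f_{25}^5/f_5$ are both power series in $q^5$, each such summand contributes only to indices $\equiv k\pmod 5$; the unique $k$ in this range with $k\equiv 4\pmod 5$ is $k=4$, corresponding to the ``$5q^4$'' term. Extracting the $5n+4$ part gives $\sum_{n\ge0} c_5(5n+4)q^{5n+4} = 5q^4\,f_{25}^5/f_5$, and replacing $q^5$ by $q$ yields $\sum_{n\ge0} c_5(5n+4)q^n = 5 f_5^5/f_1$, which is \eqref{c5n4}.

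For \eqref{c4n1}, the crucial step is to recognize that
\[
\frac{f^5(q^5)}{f(q)} \;=\; \left[\frac{f_5^5}{f_1}\right]_{q\mapsto -q} \;=\; \sum_{n\ge 0}(-1)^n c_5(n)\,q^n.
\]
Indeed, under the paper's convention $f(-q)=(q;q)_\infty$, one has $f(q)=f_1(-q)$ and $f(q^5)=f_5(-q)$; using the product expansions $f_1(-q)=f_2^3/(f_1 f_4)$ and $f_5(-q)=f_{10}^3/(f_5 f_{20})$, both sides of the displayed identity reduce to the same eta product $f_1 f_4 f_{10}^{15}/(f_2^3 f_5^5 f_{20}^5)$. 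Plugging this observation into \eqref{f5modeg} and collecting terms by the parity of $n$ gives
\[
\sum_{k\ge 0} c_5(2k+1)\,q^{2k+1} \;=\; \sum_{m\ge 0} c_5(m)\,q^{2m+1} \;+\; 2\sum_{m\ge 0} c_5(m)\,q^{4m+3},
\]
and extracting the coefficient of $q^{4n+1}$ --- to which the second sum on the right contributes nothing --- produces $c_5(4n+1)=c_5(2n)$, as required.

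The main obstacle, or at least the non-obvious step, is the identification $f^5(q^5)/f(q)=[f_5^5/f_1](-q)$; once this is spotted, the rest of the argument is a routine comparison of coefficients. As a byproduct, reading off the coefficient of $q^{4n+3}$ in the same way would simultaneously yield the companion identity $c_5(4n+3)=c_5(2n+1)+2c_5(n)$, though this is not required in the sequel.
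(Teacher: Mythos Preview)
Your argument is correct. Both dissections are carried out cleanly: the $5$-dissection of $f_5^5/f_1$ via \eqref{partition_5dissection} isolates the $5q^4$ term and gives \eqref{c5n4} immediately, and for \eqref{c4n1} your identification $f^5(q^5)/f(q)=\sum_{n\ge 0}(-1)^n c_5(n)q^n$ (which you verify by reducing both sides to the common eta quotient $f_1 f_4 f_{10}^{15}/(f_2^3 f_5^5 f_{20}^5)$) turns \eqref{f5modeg} into a relation among four $c_5$-generating functions, from which the coefficient of $q^{4n+1}$ yields the claim.

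The paper itself does not prove this lemma at all: it simply cites \cite[Eq.~(4.8)]{NDB-BCB} for \eqref{c4n1} and \cite[Eq.~(5.1)]{GKS} for \eqref{c5n4}. Your approach is therefore genuinely different in that it supplies a self-contained derivation using only identities already recorded in the paper's preliminary lemmas (namely \eqref{partition_5dissection} and \eqref{f5modeg}). This has the advantage of eliminating the external dependence and showing that the two $c_5$-identities are immediate consequences of the theta-function toolkit the paper has already assembled; the paper's citation approach, on the other hand, is shorter and attributes the results to their original sources. Your bonus observation that the same $2$-dissection also yields $c_5(4n+3)=c_5(2n+1)+2c_5(n)$ is correct as well.
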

\begin{proof}
	See \cite[Eq.\ (4.8)]{NDB-BCB} and \cite[Eq.\ (5.1)]{GKS}.
\end{proof}

\section{Proof of Theorem \ref{a5_main_theorem}}\label{analogues-sec3}

\vspace{.4cm}\noindent \emph{Proofs of \eqref{a5n2_thm_statement} and \eqref{a5n3_thm_statement}}.
Replacing $q$ by $-q$ in \eqref{phimodeq}, we have
\begin{align}\label{starting_point_for_a5_rec_proof}
	\dfrac{\varphi^5(-q^5)}{\varphi(-q)} = 4q\dfrac{f_5^5}{f_1} + \varphi(-q) \varphi^3(-q^5),
\end{align}
which, by \eqref{ct} and \eqref{at}, may be recast as 
\begin{equation}
	\sum_{n=0}^{\infty}\overline{a}_5(n)q^n = 4\sum_{n=0}^{\infty}c_5(n)q^{n+1} + \varphi(-q)\varphi^3(-q^5).
\end{equation}

Replacing $q$ by $-q$ in \eqref{phi_5dissection} and then using the resulting identity in the above, we have 
\begin{align}\label{starting_point_for_a5_rec_proof-2}
	\sum_{n=0}^{\infty}\overline{a}_5(n)q^n &= 4\sum_{n=0}^{\infty}c_5(n)q^{n+1} \nonumber\\
	&\quad+ \varphi^3(-q^5)\left(\varphi(-q^{25}) -2qf(-q^{15}, -q^{35}) +2q^4f(-q^{5}, -q^{45}) \right).
\end{align}
Equating the coefficients of $q^{5n+2}$ and $q^{5n+3}$ from both sides of the above, we arrive at \eqref{a5n2_thm_statement} and \eqref{a5n3_thm_statement}, respectively.

\vspace{.4cm}\noindent \emph{Proofs of \eqref{a10n1_thm_statement} and \eqref{a10n9_thm_statement}}.
Multiplying both sides of \eqref{f5modeg} by $\dfrac{f_5^5 f_2}{f_{10}^5 f_1}$, we have
\begin{equation*}
	\dfrac{\varphi^5(-q^5)}{\varphi(-q)} - 4q^3\dfrac{\psi^5(-q^5)}{\psi(-q)} = \dfrac{\varphi^5(-q^{10})}{\varphi(-q^2)} + 2q\dfrac{f_5^5}{f_1}.
\end{equation*}
which, by \eqref{ct}, \eqref{at}, and \eqref{bt}, yields
\begin{equation}\label{abc-i}
	\sum_{n=0}^{\infty} \overline{a}_5(n)q^n - 4\sum_{n=0}^{\infty}\overline{b}_5(n)q^{n+3} = \sum_{n=0}^{\infty} \overline{a}_5(n)q^{2n} + 2\sum_{n=0}^{\infty}c_5(n)q^{n+1}.
\end{equation}
Comparing the coefficients of $q^{2n+1}$ from both sides, we find that 
\begin{align}
	\overline{a}_5(2n+1) - 4\overline{b}_5(2n-2) &= 2c_5(2n).\label{a2n1}
\end{align}
Replacing $n$ by $5n$ and $5n+4$, we obtain
\begin{align*}
	\overline{a}_5(10n+1) = 4\overline{b}_5(10n-2) + 2c_5(10n)\\\intertext{and}
	\overline{a}_5(10n+9) = 4\overline{b}_5(10n+6) + 2c_5(10n+8),
\end{align*}
respectively. Using \eqref{b5_10n_6} and \eqref{b5_10n_8} in the above, we arrive at \eqref{a10n1_thm_statement} and \eqref{a10n9_thm_statement}.

\vspace{.4cm}\noindent \emph{Proofs of \eqref{a5_main_theorem_1} and \eqref{a5_main_theorem_2}}. Equating the coefficients of $q^{2n}$ from both sides of \eqref{abc-i}, we have
\begin{align}
	\overline{a}_5(2n) - 4\overline{b}_5(2n-3) &= \overline{a}_5(n) + 2c_5(2n-1)\label{a2n}.
\end{align}
From \eqref{a2n1} and \eqref{a2n}, it follows that
\begin{align}
	\overline{a}_5(4n+2) - 4\overline{b}_5(4n-1) &= \overline{a}_5(2n+1) + 2c_5(4n+1),\label{a4n2}\\
	\overline{a}_5(4n) - 4\overline{b}_5(4n-3) &= \overline{a}_5(2n) + 2c_5(4n-1),\label{a4n}\\
	\overline{a}_5(4n+1) - 4\overline{b}_5(4n-2) &= 2c_5(4n),\label{a4n1}\\
	\overline{a}_5(4n+3) - 4\overline{b}_5(4n) &= 2c_5(4n+2).\label{a4n3}
\end{align}

Again, employing \eqref{ct} and \eqref{bt}, it follows from \eqref{psimodeq} that
\begin{equation}\label{b4n-i}
	\sum_{n=0}^{\infty} \overline{b}_5(n)q^n - \sum_{n=0}^{\infty} \overline{b}_5(n)(-q)^n=4\sum_{n=0}^{\infty}(-1)^n\overline{b}_5(n)q^{2n+3}+2 \sum_{n=0}^{\infty} c_5(n)q^{4n+1}.
\end{equation}
Equating the coefficients of $q^{4n+3}$ from both sides of the above, we have
\begin{align}
	\overline{b}_5(4n+3) &= 2\overline{b}_5(2n).\label{b4n3} 
\end{align}
It follows from \eqref{a2n1} and \eqref{b4n3} that
\begin{align*}
	\overline{a}_5(2n+1) = 2c_5(2n) + 2\overline{b}_5(4n-1).
\end{align*}
Using \eqref{c4n1} and the above identity in \eqref{a4n2}, we obtain
\begin{align}\label{a4n2_v2}
	\overline{a}_5(4n+2) = 3\overline{a}_5(2n+1) -2c_5(2n),
\end{align}
which by replacement of $n$ with $5n+1$ yields 
\begin{align}\label{a4n2_v2i}
	\overline{a}_5(20n+6) = 3\overline{a}_5(10n+3) - 2c_5(10n+2).
\end{align}

Again, replacing $n$ by $2n$ in \eqref{a5n3_thm_statement}, we have
\begin{align}
	\overline{a}_5(10n+3) &= 4c_5(10n+2). \label{name4}
\end{align}
It follows from \eqref{a4n2_v2i} and \eqref{name4} that
\begin{align*}
	\overline{a}_5(20n+6) = 10c_5(10n+2), 
\end{align*}
which is \eqref{a5_main_theorem_1}.

Next, replacing $n$ by $5n+3$ in \eqref{a4n2_v2}, we have
\begin{align} \label{a5-20n14}
	\overline{a}_5(20n+14) = 3\overline{a}_5(10n+7) - 2c_5\left(10n+6\right).
\end{align}

Again, replacing $n$ by $2n+1$ in \eqref{a5n2_thm_statement}, we have
\begin{align}
	\overline{a}_5(10n+7) &= 4c_5(10n+6). \label{name3}
\end{align} 
It follows from \eqref{a5-20n14} and \eqref{name3} that
\begin{align*}
	\overline{a}_5(20n+14) = 10c_5(10n+6),
\end{align*}
which is \eqref{a5_main_theorem_2}.

\vspace{.4cm}\noindent \emph{Proof of \eqref{a5_main_theorem_3}}.
With the aid of \eqref{at}, we recast \eqref{starting_point_for_a5_rec_proof} as
\begin{align}\label{a5_rec_main_1}
	\sum_{n=0}^{\infty}\overline{a}_5(n)q^n = 4q\dfrac{f_5^5}{f_1} + \varphi(-q) \varphi^3(-q^5).
\end{align}
Employing the 5-dissections of $\varphi(-q)$ from \eqref{phi_5dissection} and that of $1/f_1$ from \eqref{partition_5dissection} in the above identity, we have
\begin{align*}
	\sum_{n=0}^{\infty}\overline{a}_5(n)q^n &= 4q\dfrac{f_{25}^5}{f_5}\Big(R(q^{5})^{-4} + qR(q^{5})^{-3} + 2q^{2}R(q^{5})^{-2} + 3q^{3}R(q^{5})^{-1} + 5q^{4} \\
		&\quad - 3q^{5}R(q^{5}) + 2q^{6}R(q^{5})^{2} - q^{7}R(q^{5})^{3} + q^{8}R(q^{5})^{4} \Big) \\
		&\quad+ \varphi^3(-q^5)\Big(\varphi(-q^{25}) - 2qf(-q^{15}, -q^{35}) + 2q^{4}f(-q^{5}, -q^{45})\Big).
\end{align*}
Extracting the terms involving $q^{5n}$ from both sides of the above, and then replacing $q^5$ by $q$, we find that
\begin{align}\label{a5_rec_main_2}
	\sum_{n=0}^{\infty}\overline{a}_5(5n)q^n = 20q\dfrac{f_5^5}{f_1} + \varphi^3(-q)\varphi(-q^5).
\end{align}
Subtracting \eqref{a5_rec_main_1} from \eqref{a5_rec_main_2},
\begin{align}\label{a5_eliminate_from_this_1}
	\sum_{n=0}^{\infty}\overline{a}_5(5n)q^n - \sum_{n=0}^{\infty}\overline{a}_5(n)q^n &= 16q\dfrac{f_5^5}{f_1} + \varphi(-q)\varphi(-q^5)\left(\varphi^2(-q) - \varphi^2(-q^5)\right).
\end{align}
Employing \eqref{partition_5dissection} and \eqref{phi_5dissection} in the above and then extracting the terms involving $q^{5n}$, we obtain
\begin{align*}
	&\sum_{n=0}^{\infty}\overline{a}_5(25n)q^n - \sum_{n=0}^{\infty}\overline{a}_5(5n)q^n \\
	&= 80q\dfrac{f_5^5}{f_1} + \varphi(-q)\left(\varphi^3(-q^5) - 24q\varphi(-q^5)f(-q^{3},-q^{7})f(-q,-q^{9})\right)\\
	&\quad - \varphi(-q)\varphi(-q^5).
\end{align*}
Replacing $q$ by $-q$ in \eqref{phimodeqfora5} and then employing in the above identity, we find that
\begin{align*}
	\sum_{n=0}^{\infty}\overline{a}_5(25n)q^n - \sum_{n=0}^{\infty}\overline{a}_5(5n)q^n &= 80q\dfrac{f_5^5}{f_1} + 5\varphi(-q)\varphi(-q^5)\left(\varphi^2(-q) - \varphi^2(-q^5)\right),
\end{align*}
which, by \eqref{a5_eliminate_from_this_1}, yields
\begin{equation*}
	\sum_{n=0}^{\infty}\overline{a}_5(25n)q^n - \sum_{n=0}^{\infty}\overline{a}_5(5n)q^n = 5\sum_{n=0}^{\infty}\overline{a}_5(5n)q^n - 5\sum_{n=0}^{\infty}\overline{a}_5(n)q^n.
\end{equation*}
Equating the coefficients of $q^n$ from both sides, we find that, for any nonnegative integer $n$,
\begin{align}
	\overline{a}_5(25n) = 6\overline{a}_5(5n) - 5\overline{a}_5(n).
\end{align}
Now \eqref{a5_main_theorem_3} follows by mathematical induction on $k\geq2$.

\section{Proof of Theorem \ref{b5_main_theorem1}} \label{analogues-sec4}

Note that \eqref{b5_main_theorem_preliminary} is identical to \eqref{b4n3}. Therefore, we proceed to prove only \eqref{b5_main_theorem_5}.

Replacing $q$ by $-q$ in \eqref{psimodeqforb5}, we have
\begin{align*}
	q\psi^2(-q^5) = \dfrac{f(q^5) \varphi(q^5)}{\chi(q)}-\psi^2(-q).
\end{align*}
Multiplying both sides of the above identity by $\dfrac{\psi^3(-q^5)}{\psi(-q)}$, we find that
\begin{align}
	q\dfrac{\psi^5(-q^5)}{\psi(-q)} &= \dfrac{f_{10}^5}{f_2} - \psi(-q)\psi^3(-q^5), \label{b5_rec_main_2_proof_starting_point} 
\end{align}
which, by \eqref{bt}, can be recast as
\begin{align}\label{b5-rec-proof-new1}
	\sum_{n=0}^{\infty}\overline{b}_5(n)q^{n+1} &= \dfrac{f_{10}^5}{f_2}  -  \psi(-q)\psi^3(-q^5).
\end{align}
Employing the 5-dissection of $\psi(-q)$ from \eqref{psi_5dissection} and that of $1/f_2$ from \eqref{partition_5dissection} in \eqref{b5_rec_main_2_proof_starting_point}, and then extracting the terms involving $q^{5n+3}$ from both sides of the resulting identity, we obtain 
\begin{align}
	\sum_{n=0}^{\infty}\overline{b}_5(5n+2)q^{n} &= 5q\dfrac{f_{10}^{5}}{f_2} + \psi^3(-q)\psi(-q^5). \label{b5_rec_main_2}
\end{align}
Multiplying \eqref{b5-rec-proof-new1} by $q$ and subtracting from \eqref{b5_rec_main_2},
\begin{align}\label{b5_eliminate_from_this_1}
	&\sum_{n=0}^{\infty}\overline{b}_5(5n+2)q^{n} -\sum_{n=0}^{\infty}\overline{b}_5(n)q^{n+2} \nonumber\\
	&= 4q\dfrac{f_{10}^{5}}{f_2} + \psi(-q)\psi(-q^5)\left(\psi^2(-q) + q\psi^2(-q^5)\right).
\end{align}
Again, using \eqref{partition_5dissection} and \eqref{psi_5dissection} in the above identity and extracting the terms involving $q^{5n+4}$ from both sides, we have
\begin{align}\label{b5_rec_proof_before_last_step}
	&\sum_{n=0}^{\infty}\overline{b}_5(25n+22)q^{n} -\sum_{n=0}^{\infty}\overline{b}_5(5n+2)q^{n} \nonumber\\
	&= 20q\dfrac{f_{10}^{5}}{f_2} + \psi(-q)\left(6\psi(-q^5)f(q^{2},-q^{3})f(-q,q^{4}) - q\psi^3(-q^5)\right) \nonumber\\
	&\quad - \psi^3(-q)\psi(-q^5).
\end{align}
Replacing $q$ by $-q$ in \eqref{psimodeqforb5} and employing in the above identity, we obtain
\begin{align}\label{b5_eliminate_from_this_2}
	&\sum_{n=0}^{\infty}\overline{b}_5(25n+22)q^{n} - \sum_{n=0}^{\infty}\overline{b}_5(5n+2)q^{n} \nonumber\\ 
	&= 20q\dfrac{f_{10}^{5}}{f_2} + 5\psi(-q)\psi(-q^5)\left(\psi^2(-q) + q\psi^2(-q^5)\right).
\end{align}
From \eqref{b5_eliminate_from_this_1} and \eqref{b5_eliminate_from_this_2}, it follows that
\begin{align*}
	&\sum_{n=0}^{\infty}\overline{b}_5(25n+22)q^{n} - \sum_{n=0}^{\infty}\overline{b}_5(5n+2)q^{n} = 5\sum_{n=0}^{\infty}\overline{b}_5(5n+2)q^{n}  - 5\sum_{n=0}^{\infty}\overline{b}_5(n)q^{n+2}.
\end{align*}
Comparing the coefficients of $q^{n}$ from both sides of the above equation, we find that, for any nonnegative integer $n$,
\begin{equation}
	\overline{b}_5(25n+72) = 6\overline{b}_5(5n+12) - 5\overline{b}_5(n).
\end{equation}
The general recurrence relation \eqref{b5_main_theorem_5} now follows by mathematical induction on $k\geq2$.

\section{Proof of Theorem \ref{b5_main_theorem}}\label{analogues-sec5}

\vspace{.4cm}\noindent\emph{Proofs of \eqref{b5_main_theorem_preliminary1}, \eqref{b5_10n}, and \eqref{b5_10n_4}}. Equating the coefficients of $q^{4n+1}$ from both sides of \eqref{b4n-i}, have
\begin{align*}
	\overline{b}_5(4n+1) &= c_5(n) -2\overline{b}_5(2n-1),	
\end{align*} which is \eqref{b5_main_theorem_preliminary1}.

Replacing $n$ by $n+1$ in \eqref{a2n1} and rearranging the terms,
\begin{equation}\label{starting_for_b5_10n_manipulations}
	4\overline{b}_5(2n) = \overline{a}_5(2n+3) - 2c_5(2n+2).
\end{equation}
Replacing $n$ by $5n$ in the above identity and using \eqref{name4}, we have
\begin{align*}
	4\overline{b}_5(10n) &= \overline{a}_5(10n+3) - 2c_5(10n+2) \\
	&= 4c_5(10n+2) - 2c_5(10n+2) \\
	&= 2c_5(10n+2),
\end{align*}
which leads to \eqref{b5_10n}. 

Next, replacing $n$ by $5n+2$ in \eqref{starting_for_b5_10n_manipulations} and employing \eqref{name3}, we obtain
\begin{align*}
	4\overline{b}_5(10n+4) &= \overline{a}_5(10n+7) - 2c_5(10n+6) \\
	&= 4c_5(10n+6) - 2c_5(10n+6) \\
	&= 2c_5(10n+6),
\end{align*}
implying \eqref{b5_10n_4}.

\vskip .4cm
\noindent \emph{Proofs of \eqref{b5_10n_1}, \eqref{b5_10n_3}, \eqref{b5_10n_6}, and \eqref{b5_10n_8}}. Employing \eqref{psi_5dissection} in \eqref{b5-rec-proof-new1}, we have
\begin{align}
	\label{b5_rec_main_1}&\sum_{n=0}^{\infty}\overline{b}_5(n)q^{n+1}\notag\\
	&= \sum_{n=0}^{\infty}c_5(n)q^{2n} - \psi^3(-q^5)\left(f(q^{10}, -q^{15}) -qf(-q^5, q^{20}) -q^3\psi(-q^{25})\right).
\end{align}
Comparing the coefficients of the terms involving $q^{10n+2}$, $q^{10n+4}$, $q^{10n+7}$, and $q^{10n+9}$ from both sides of the above identity, we arrive at the desired results of \eqref{b5_10n_1}, \eqref{b5_10n_3}, \eqref{b5_10n_6}, and \eqref{b5_10n_8}, respectively.

\vspace{.4cm}\noindent \emph{Proofs of \eqref{b5_20n_5}, \eqref{b5_20n_9}, \eqref{b5_20n_15}, and \eqref{b5_20n_19}}. Replacing $n$ by $5n+1$ in \eqref{b5_main_theorem_preliminary1} and then applying \eqref{b5_10n_1},
\begin{align*}
	\overline{b}_5(20n+5) &= c_5(5n+1) - 2\overline{b}_5(10n+1) \\
	&= c_5(5n+1) - 2c_5(5n+1) \\
	&= -c_5(5n+1),
\end{align*}
which proves \eqref{b5_20n_5}. 

Similarly, replacing $n$ by $5n+2$ in \eqref{b5_main_theorem_preliminary1} and using \eqref{b5_10n_3}, we arrive at \eqref{b5_20n_9}.

Replacing $n$ by $5n+3$ in \eqref{b5_main_theorem_preliminary} and then employing \eqref{b5_10n_6}, we have
\begin{align*}
	\overline{b}_5(20n+15) &= 2\overline{b}_5(10n+6) \\
	&= 0,
\end{align*}
which proves \eqref{b5_20n_15}. 

In a similar manner, replacing $n$ by $5n+4$ in \eqref{b5_main_theorem_preliminary} and utilizing \eqref{b5_10n_8}, we obtain \eqref{b5_20n_19}. 

\vskip .4cm
\noindent \emph{Proofs of \eqref{b5_10n_2} and \eqref{b5_20n_7}}.
From \eqref{phi-q} and \eqref{A+4B}, we see that
\begin{align*}
	\varphi^3(-q)\varphi(-q^5) &= \dfrac{f_1^6 f_5^2}{f_2^3 f_{10}} \\
	&= \dfrac{f_1^2 f_5^6}{f_2 f_{10}^3} -4q\dfrac{f_1^3 f_5 f_{10}^2}{f_2^2} \\
	&= \varphi(-q)\varphi^3(-q^5) -4q\dfrac{f_5^5}{f_1} +16q^2\dfrac{f_{10}^5}{f_2}.
\end{align*}
Utilizing \eqref{a5_eliminate_from_this_1}, the above identity can be recast as
\begin{equation*}
	\sum_{n=0}^{\infty}\overline{a}_5(5n)q^n = \sum_{n=0}^{\infty}\overline{a}_5(n)q^n +12q\dfrac{f_5^5}{f_1} +16q^2\dfrac{f_{10}^5}{f_2}.
\end{equation*}
Extracting the terms with odd powers of $q$ from both sides, we arrive at
\begin{equation}\label{a5_10n_5}
	\overline{a}_5(10n+5) = \overline{a}_5(2n+1) +12c_5(2n).
\end{equation}

Now, replacing $n$ by $10n+5$ in \eqref{a2n},
\begin{align*}
	4\overline{b}_5(20n+7) &= \overline{a}_5(20n+10) - \overline{a}_5(10n+5) - 2c_5(20n+9).
\end{align*}
Employing \eqref{a4n2_v2} with $n$ replaced by $5n+2$ and \eqref{c4n1} with $n$ replaced by $5n+2$ in the above identity, and then using \eqref{c5n4}, we find that
\begin{align*}	
	4\overline{b}_5(20n+7)&= 3\overline{a}_5(10n+5) - 2c_5(10n+4)  - \overline{a}_5(10n+5) - 2c_5(10n+4) \\
	&= 2\overline{a}_5(10n+5)-4c_5(10n+4) \\
	&= 2\overline{a}_5(10n+5)-20c_5(2n).
\end{align*}
Applying \eqref{a5_10n_5} in the above expression, we obtain
\begin{equation*}
	2\overline{b}_5(20n+7) = \overline{a}_5(2n+1) +2c_5(2n)
\end{equation*}
which implies \eqref{b5_20n_7}.

Finally, replacing $n$ by $5n+1$ in \eqref{b5_main_theorem_preliminary} and then applying \eqref{b5_20n_7}, we have
\begin{align*}
	\overline{b}_5(10n+2) &= \dfrac{1}{2}\overline{b}_5(20n+7) \\
	&= \dfrac{1}{4}\overline{a}_5(2n+1) +\dfrac{1}{2}c_5(2n),
\end{align*}
which is \eqref{b5_10n_2}.

\section{Acknowledgment}
The first author was partially supported by an INSPIRE Fellowship for Doctoral Research, DST, Government of India. The author acknowledges the funding agency.

\end{document}